\def\R{\mathbb{R}}
\def\N{\mathbb{N}}
\def\cA{\mathcal{A}}
\def\cI{\mathcal{I}}
\def\cM{\mathcal{M}}
\def\cN{\mathcal{N}}
\def\cO{\mathcal{O}}
\def\cS{\mathcal{S}}
\def\cT{\mathcal{T}}
\def\a{\alpha}
\def\b{\beta}
\def\g{\gamma}
\def\d{\delta}
\def\veps{\varepsilon}
\def\vrho{\varrho}
\def\vphi{\varphi}
\def\O{\Omega}
\newcommand{\dv}[1]{\,{\mathrm d}#1}
\newcommand{\dual}[2]{\langle#1,#2\rangle}
\newcommand{\wcheck}[1]{#1\hspace{-.8ex}\mbox{\huge {\lower.45ex \hbox{$\textstyle \check{}$}}} \hspace{.5ex}}
\DeclareMathOperator{\id}{id}
\let\oldmarginpar\marginpar
\renewcommand\marginpar[1]{
  \oldmarginpar[\raggedleft\footnotesize #1]
  {\raggedright\footnotesize #1}}
\newtheorem{definition}{Definition}
\newtheorem{proposition}[definition]{Proposition}
\newtheorem{remark}[definition]{Remark}
\newtheorem{remarks}[definition]{Remarks}
\newtheorem{example}[definition]{Example}
\newtheorem{algorithm}[definition]{Algorithm}
\numberwithin{definition}{section}
\def\hP{\widehat{P}}
\def\hI{\widehat{I}}
\def\tpi{\widetilde{\pi}}
\def\tphi{\widetilde{\phi}}
\def\tpsi{\widetilde{\psi}}
\begin{document}
\title[Discrete optimal transport]{Error bounds for discretized 
optimal transport and its reliable efficient numerical solution}
\author[S. Bartels]{S\"oren~Bartels}
\address{Abteilung f\"ur Angewandte Mathematik,  
Albert-Ludwigs-Universit\"at Freiburg, Hermann-Herder Str. 10, 
79104 Freiburg i.Br., Germany}
\email{bartels@mathematik.uni-freiburg.de}
\author[S. Hertzog]{Stephan Hertzog}
\address{Abteilung f\"ur Angewandte Mathematik,  
Albert-Ludwigs-Universit\"at Freiburg, Hermann-Herder Str. 10, 
79104 Freiburg i.Br., Germany}
\email{stephanhertzog@gmail.com}
\date{\today}
\subjclass{65K10, 49M25, 90C08}
\begin{abstract}
The discretization of optimal transport problems often leads to large
linear programs with sparse solutions. We derive error estimates for
the approximation of the problem using convex combinations of Dirac
measures and devise an active-set strategy that uses the optimality
conditions to predict the support of a solution within a multilevel
strategy. Numerical experiments confirm the theoretically predicted
convergence rates and a linear growth of effective problem sizes 
with respect to the variables used to discretize given data. 
\end{abstract}
\keywords{Optimal transport, sparsity, optimality conditions, 
error bounds, iterative solution}
\maketitle


\section{Introduction}
The goal in {\em optimal transportation} is to transport a measure $\mu$ 
into a measure $\nu$ with minimal total effort with respect to a given cost 
function $c$. This optimization problem can be formulated as an 
infinite-dimensional linear program. One way to find optimal solutions 
is to approximate the transport problem by (finite-dimensional) standard
 linear programs. This can be done by approximating the measures $\mu$ 
and $\nu$ by convex combinations of Dirac measures and we prove that this
leads to accurate approximations of optimal costs. The size of these 
linear programs grows quadratically in the size of the supports of these 
approximations, i.e., if $M$ and $N$ are the number of atoms on which the 
approximations are supported, then the size of the linear programs is $MN$. 
Thus, they can only be solved directly on coarse grids, i.e., for small 
$M$ and $N$. It is another goal of this article to devise an iterative 
strategy that automatically identifies the support of a solution using 
auxiliary problems of comparable sizes. For other approaches to the numerical
solution of optimal transport problems we refer the reader 
to~\cite{RusUck00,BenBre00,BeFrOb14,BenCar15,BarSch17-pre}; for details on the 
mathematical formulation and its analytical features 
we refer the reader to~\cite{Eva99,Vil03,Vil08}.

Our error estimate follows from identifying convex combinations of
Dirac measures supported in the nodes of a given triangulation as 
approximations of probability measures via the adjoint of the standard
nodal interpolation operator defined on continuous functions. Thereby,
it is possible to quantify the approximation quality of a discretized
probability measure in the operator norm related to a class of 
continuously differentiable functions. 

Using the fact that if~$c$ is strictly convex and~$\mu$ has a density, 
the support of optimal solutions is contained in a lower dimensional 
set, we expect 
that the linear programs have a sparse solution, i.e., the number of 
nonzero entries in the solution matrix is comparable to $M+N$. Related 
approaches have previously been discussed in the literature, 
cf.~\cite{ObeRua15,Sch16}. In this article we aim at investigating a 
general strategy that avoids assumptions on an initial guess or 
a coarse solution and 
particular features of the cost function and thus leads to 
an efficient solution procedure that is fully reliable. 

The optimality conditions for standard linear programs characterize the 
optimal support using the Lagrange multipliers $\phi$ and $\psi$ which 
occur as solutions of the dual problem. Given approximations of those 
multipliers, we may restrict the full linear program to the small set 
of atoms where those approximations satisfy the characterizing equations 
of the optimal support up to some tolerance, with the expectation that 
the optimal support is contained in this set. If the solution of the 
corresponding reduced linear program satisfies the optimality conditions 
of the full problem, a global solution is found. Otherwise, the tolerance 
is increased to enlarge the active set of the reduced problem, and the 
procedure is repeated. Good approximations of the Lagrange multipliers 
result from employing a multilevel scheme and in each step prolongating 
the dual solutions computed on a coarser grid to the next finer grid. 

Our numerical experiments reveal that this iterative strategy leads to 
linear programs whose dimensions are comparable to $M+N$. 
The optimality conditions have to be checked on the full product grid which 
requires $\cO(MN)$ arithmetic operations. These are however fully independent
and can be realized in parallel. The related algorithm of \cite{ObeRua15} 
avoids this test and simply adds atoms in a neighborhood of a coarse grid 
solution. This is an efficient strategy if a good coarse grid solution
is available. 

Another alternative is the method presented in \cite{Sch16} where the 
concept of shielding neighbourhoods is introduced. Solutions which are 
optimal in a {\em shielding neighbourhood} are analytically shown to be 
globally optimal. Strategies to construct those sets are presented for 
several cost functions. However, each cost function requires a particular
 strategy to find the neighbourhoods, depending on its geometric structure. 
Critical for the efficiency of the algorithm is the sparsity of shielding 
neighbourhoods for which theoretical bounds and intuitive arguments are 
given, confirmed by numerical experiments.

The efficiency of our numerical scheme can be greatly increased if it is 
combined with the methods from~\cite{ObeRua15} or~\cite{Sch16}. In this 
case the activation of atoms is only done within the described neighbourhoods
of the support of a current approximation. This is expected to be reliable 
once asymptotic convergence behaviour is observed. 

The outline of this article is as follows. The general optimal transport
problem, its discretization, optimality conditions, and sparsity properties
are discussed in Section~\ref{sec_ot}. A rigorous error analysis for 
optimal costs based
on the approximation of marginal measures via duality is carried out 
in Section~\ref{sec_erroranalysis}. Section~\ref{sec_strategy} devises the 
multilevel active set stategy for efficiently solving the linear programs 
arising from the discretization. The efficiency of the algorithm and 
the optimality of the error estimates are illustrated via
numerical experiments in Section~\ref{sec_exp}.


\section{Discretized Optimal Transport} \label{sec_ot}
We describe in this section the general mathematical framework for 
optimal transport problems, their discretization, optimality conditions,
and sparsity properties of optimal transport plans. 

\subsection{General formulation} 
The general form of an optimal transport problem seeks a 
probability measure $\pi \in \cM(X\times Y)$ called a {\em transport plan}
on probability spaces $X$ and $Y$ such that its projections onto 
$X$ and $Y$ coincide with given probability measures $\mu\in\cM(X)$ and $\nu\in\cM(Y)$, 
respectively, called {\em marginals}, and such that it is optimal in the 
set of all such measures for a given continuous {\em cost function} 
$c:X\times Y\to \R$. The minimization problem thus reads:
\[
(\hP) \quad \left\{ 
\begin{array}{l}
\text{Minimize } \hI[\pi] = \iint_{X\times Y} c(x,y) \dv{\pi(x,y)} \\[1.5mm]
\text{subject to } \pi \in \cM(X\times Y), \, \pi \ge 0, \, P_X \pi = \mu, \, P_Y \pi = \nu
\end{array} 
\right. 
\]
Here, $P_X \pi$ and $P_Y \pi$ are defined via $P_X \pi(A) = \pi(A\times Y)$
and $P_Y \pi (B) = \pi(X\times B)$ for measurable sets $A\subset X$ and $B\subset Y$, 
respectively. 
This formulation may be regarded as a relaxation of the problem of 
determining a {\em transport map} $T:X\to Y$ which minimizes a cost 
functional in the set of bijections between $X$ and $Y$ 
subject to the constraint that the measure $\mu$ is pushed
forward by $T$ into the measure $\nu$: 
\[
(P) \quad \left\{ 
\begin{array}{l}
\text{Minimize } I[T] = \int_X c(x,T(x)) \dv{\mu(x)} \\[1.5mm]
\text{subject to $T$ bijective and } T_\# \mu = \nu
\end{array} 
\right. 
\]
In the case that $\mu$ and $\nu$
have densities $f\in L^1(X)$ and $g\in L^1(Y)$ the relation 
$T_\# \mu = \nu$ is equivalent to the identity  
\[
g\circ T \det DT = f,
\]
which is a Monge--Amp\`ere equation if $T= \nabla \Phi$ for a convex potential~$\Phi$.
Since the formulation $(P)$ does not provide sufficient
control on variations of transport maps to pass to limits in
the latter equation, it is difficult to establish the existence of solutions
directly. In fact, optimal transport maps may not exist, e.g., when
a single Dirac mass splits into a convex combination of several 
Dirac masses. The linear 
program $(\hP)$ extends the formulation $(P)$ via graph
measures $\pi = (\id \times T)\# \mu$ 
and admits solutions. In the case of a strictly convex cost function $c$ 
it can be shown that optimal transport plans correspond to optimal transport 
maps, i.e., optimal 
plans are supported on graphs of transport maps, provided that $\mu$ has a density. 
In this sense $(\hP)$ is a relaxation of $(P)$;
we refer the reader to~\cite{Eva99,Vil03,Vil08} for details. 

\subsection{Discretization} 
In the case where the marginals are given by convex 
combinations of Dirac measures supported in atoms 
$(x_i)_{i=1,\dots,M} \subset X$ and 
$(y_j)_{j=1,\dots,N} \subset Y$, respectively, i.e.,  
\[
\mu_h = \sum_{i=1}^M \mu_h^i \d_{x_i}, \quad \nu_h = \sum_{j=1}^N \nu_h^j \d_{y_j}, 
\]
we have that admissible transport plans $\pi$ are supported in the set of
pairs of atoms $(x_i,y_j)$.
Indeed, if $A\times B \subset X\times Y$ with $(x_i,y_j)\not \in A\times B$, i.e.,
$x_i \not \in A$ for all $i\in \{1,2,\dots,M\}$ or $y_j\not \in B$ for all 
$j\in \{1,2,\dots,N\}$, then one of the inequalities 
\[\begin{split}
\pi(A\times B) &\le \pi(A\times Y) = \mu_h(A) = 0, \\
\pi(A\times B) &\le \pi(X\times B) = \nu_h(B) = 0,
\end{split}\]
holds, and we deduce $\pi(A\times B)= 0$. 
By approximating measures $\mu$ and $\nu$ by convex combinations of Dirac 
measures $\mu_h$ and $\nu_h$, we therefore directly obtain a standard linear 
program that determines the unknown matrix $\pi_h \in \R^{M\times N}$: 
\[
(\hP_h) \quad \left\{ 
\begin{array}{l}
\text{Minimize } \hI_h[\pi_h] 
= \sum_{i=1}^M \sum_{j=1}^N c(x_i,y_j)  \pi_h^{ij} \\[1.5mm]
\text{subject to } \pi_h \ge 0, \ 
\sum_{j=1}^N \pi_h^{ij} = \mu_h^i, \ \sum_{i=1}^M \pi_h^{ij} = \nu_h^j
\end{array} 
\right.
\]
The rigorous construction of approximating measures $\mu_h$ and $\nu_h$ 
via duality arguments will be described below in Section~\ref{sec_erroranalysis}.
Weak convergence of discrete transport plans to optimal transport 
plans can be established via abstract theories, cf.~\cite{Vil08,ObeRua15}
for details.

\subsection{Optimality conditions}
Precise information about the support of an optimal discrete transport plan $\pi_h$ 
are provided by the Lagrange multipliers corresponding to the marginal constraints. 
Including these in an augmented Lagrange functional $\widehat{L}_h$ leads to
\[\begin{split}
\widehat{L}_h[\pi_h;\phi_h,\psi_h] 
&= \hI_h[\pi_h] 
+ \sum_{i=1}^M \phi_h^i \Big(\mu_h^i - \sum_{j=1}^N \pi_h^{ij}\Big)
+ \sum_{j=1}^N \psi_h^j \Big(\nu_h^j - \sum_{i=1}^M \pi_h^{ij}\Big) \\
&= \sum_{i=1}^M \sum_{j=1}^N \pi_h^{ij} \Big( c(x_i,y_j) - \phi_h^i - \psi_h^j\big)
+ \sum_{i=1}^M \phi_h^i \mu_h^i + \sum_{j=1}^N \psi_h^j \nu_h^j.
\end{split}\]
Minimization in $\pi_h\ge 0$ and maximization in $\phi_h$ and $\psi_h$ provide 
the condition 
\[
c(x_i,y_j) - \phi_h^i - \psi_h^j \ge 0,
\]
and the implication 
\[
\phi_h^i + \psi_h^j < c(x_i,y_j) \quad \implies \quad \pi_h^{ij} = 0,
\]
which determines the support of the discrete transport plan $\pi_h$. 

\subsection{Sparsity} 
The Knott--Smith theorem and generalizations thereof state 
that optimal transport plans are supported
on $c$-cyclically monotone sets, cf.~\cite{Vil08}. In particular, if $c$ is strictly 
convex and if the marginal $\mu$ has a density then optimal transport plans
are unique and supported on the graph of the $c$-subdifferential of a convex
function~$\Phi$. For the special case of a quadratic cost function 
it follows that $\Phi$ is a solution of the Monge--Amp\`ere equation 
for which regularity properties can be established, cf.~\cite{Vil03,PhiFig13}. Hence, in
this case it is rigorously established that the support is contained
in a lower-dimensional submanifold. Typically, such a quantitative behaviour 
can be expected but may be false under special circumstances. We refer
the reader to~\cite{CheFig17} for further details on partial regularity 
properties of transport maps. 

On the discrete level it is irrelevant to distinguish measures with or
without densities since the action of a discrete measure on a finite-dimensional
set $V_h$ of continuous functions can always be identified with an integration, 
i.e., we associate a well defined density $f_h\in V_h$ by requiring that
\[
\int_X v_h f_h \dv{x} = \dual{\mu_h}{v_h},
\]
for all $v_h\in V_h$. The properties
of optimal transport plans thus apply to the discrete transport problem
introduced above. Asymptotically, these properties remain valid provided
that we have $f_h \to f$ in $L^1(X)$ for a limiting density $f\in L^1(X)$. 


\section{Error analysis} \label{sec_erroranalysis}
We derive an error estimate for the approximation of the continuous 
problem $(\hP)$ by the discrete problem $(\hP_h)$ by appropriately 
interpolating measures. For this we follow~\cite{Roub97} and assume
that we are given a triangulation $\cT_h$ with maximal mesh-size~$h>0$
of a domain $U \subset \R^d$ which represents $X$ or $Y$ with nodes 
\[
\cN_h = \{z_1,z_2,\dots,z_L\}
\]
and associated nodal basis functions $(\vphi_z: z\in \cN_h)$. 
With the corresponding nodal interpolation operator
\[
\cI_h : C(U) \to \cS^1(\cT_h), \quad 
\cI_h v = \sum_{z\in \cN_h} v(z) \vphi_z,
\]
we define approximations $\cI_h^* \vrho$ of measures 
$\vrho \in \cM(U) \simeq C(U)^*$ via
\[
\dual{\cI_h^* \vrho}{u} = \dual{\vrho}{\cI_h u} 
= \sum_{z\in \cN_h} u(z) \dual{\vrho}{\vphi_z},
\]
i.e., we have the representation 
\[
\cI_h^* \vrho = \sum_{z\in \cN_h} \vrho_z \d_z
\]
with $\vrho_z = \dual{\vrho}{\vphi_z}$. Standard nodal interpolation
estimates imply that we have, cf.~\cite{BreSco08}, 
\[
\big|\dual{\vrho- \cI_h^* \vrho}{u}\big| = \big|\dual{\vrho}{u-\cI_h u}\big|
\le c_\cI h^{1+\a} \|u\|_{C^{1,\a}(U)} \|\vrho\|_{\cM(U)},
\]
for all $u\in C^{1,\a}(U)$. 
Analogously, we can approximate measures on the product space $X\times Y$
with triangulations $\cT_{X,h}$ and $\cT_{Y,h}$, nodes $\cN_{X,h}$ and
$\cN_{Y,h}$, and interpolation operators $\cI_{X,h}$ and $\cI_{Y,h}$, 
respectively, via
\[
\dual{\cI_{X\otimes Y,h}^* \pi}{r} = \dual{\pi}{\cI_{X\otimes Y,h} r}
= \sum_{(x,y)\in \cN_{X,h} \times \cN_{Y,h}} r(x,y) \dual{\pi}{\vphi_x \otimes \vphi_y},
\]
for all $r\in C(X\times Y)$. 
In the following error estimate we abbreviate the optimal values of
the minimization problems $(\hP)$ and $(\hP_h)$ by $\min_{\pi \ge 0} \hI[\pi]$
and $\min_{\pi_h \ge 0} \hI_h [\pi_h]$, respectively. 

\begin{proposition} \label{prop_energy_conv}
Assume that $\mu_h = \cI_{X,h}^* \mu$ and $\nu_h = \cI_{Y,h}^*\nu$. 
If $c\in C^{1,\a}(X\times Y)$ with $\a\in [0,1]$ we then have
\[
\big| \min_{\pi \ge 0} \hI[\pi] - \min_{\pi_h \ge 0} \hI_h[\pi_h]\big| 
\le c_\cI h^{1+\a} \|c\|_{C^{1,\a}(X\times Y)}.
\]
\end{proposition}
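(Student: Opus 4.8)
The plan is to prove the two-sided estimate by bounding the optimal cost of each problem in terms of the other, using admissible competitors transferred between the two problems via the interpolation machinery. The key observation is that the objective functional evaluated on a plan $\pi$ is nothing but the duality pairing $\dual{\pi}{c}$, so the nodal interpolation estimate stated just before the proposition applies directly once we know how the discrete marginals act on $c$.

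First I would exploit the structure $\mu_h = \cI_{X,h}^*\mu$, $\nu_h = \cI_{Y,h}^*\nu$ together with the tensor-product interpolation $\cI_{X\otimes Y,h}$. Given an optimal plan $\pi$ for $(\hP)$, I would set $\pi_h = \cI_{X\otimes Y,h}^*\pi$ and check it is admissible for $(\hP_h)$: nonnegativity follows because the nodal basis functions are nonnegative (so $\dual{\pi}{\vphi_x\otimes\vphi_y}\ge 0$), and the marginal constraints follow from $P_X\pi = \mu$, $P_Y\pi=\nu$ and the partition-of-unity property $\sum_x \vphi_x \equiv 1$, which gives $\sum_j \pi_h^{ij} = \dual{\pi}{\vphi_{x_i}\otimes 1} = \dual{\mu}{\vphi_{x_i}} = \mu_h^i$, and symmetrically for $\nu_h^j$. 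Then
\[
\min_{\pi_h\ge 0}\hI_h[\pi_h] \le \hI_h[\cI_{X\otimes Y,h}^*\pi]
= \dual{\cI_{X\otimes Y,h}^*\pi}{c} = \dual{\pi}{\cI_{X\otimes Y,h}\, c},
\]
so that $\min_{\pi_h\ge 0}\hI_h[\pi_h] - \min_{\pi\ge 0}\hI[\pi] \le \dual{\pi}{\cI_{X\otimes Y,h}\,c - c}$, which is bounded by $c_\cI h^{1+\a}\|c\|_{C^{1,\a}(X\times Y)}\|\pi\|_{\cM(X\times Y)} = c_\cI h^{1+\a}\|c\|_{C^{1,\a}}$ since $\pi$ is a probability measure.

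For the reverse inequality I would go through the dual problem, or equivalently construct an admissible competitor for $(\hP)$ from an optimal $\pi_h$. The cleanest route: any admissible $\pi_h$ for $(\hP_h)$ can be viewed as a measure $\sum_{i,j}\pi_h^{ij}\d_{(x_i,y_j)}$ on $X\times Y$, but its marginals are $\mu_h,\nu_h$ rather than $\mu,\nu$, so it is not directly admissible for $(\hP)$. Instead I would use weak duality: the dual of $(\hP)$ is the Kantorovich problem $\sup\{\dual{\mu}{\phi}+\dual{\nu}{\psi} : \phi(x)+\psi(y)\le c(x,y)\}$, and likewise for $(\hP_h)$. Taking an optimal dual pair $(\phi,\psi)$ for $(\hP)$ — which can be chosen in $C^{1,\a}$ under the stated regularity of $c$, or approximated by such — one checks that a suitable modification is feasible for the discrete dual and evaluates $\dual{\mu_h}{\phi}+\dual{\nu_h}{\psi} = \dual{\mu}{\cI_{X,h}\phi}+\dual{\nu}{\cI_{Y,h}\psi}$, again controlled by the interpolation estimate. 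Strong duality for both the continuous and discrete problems then closes the loop.

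The main obstacle is the reverse direction, specifically the regularity of the Kantorovich potentials: the interpolation bound requires $\phi,\psi\in C^{1,\a}$, whereas a priori optimal potentials are only $c$-concave, hence merely Lipschitz (with Lipschitz constant controlled by $\|c\|_{C^{1}}$). I expect the resolution to be either (i) to avoid the dual entirely by a symmetric primal argument — interpolating $\pi_h$ back up to a plan with marginals $\mu,\nu$ via a gluing/disintegration, incurring an $O(h^{1+\a})$ cost because $\mu_h,\nu_h$ are $O(h^{1+\a})$-close to $\mu,\nu$ in the relevant dual norm — or (ii) to observe that since $\|c\|_{C^{1}}\le\|c\|_{C^{1,\a}}$ suffices when $\a=0$, and for $\a>0$ one only needs the potentials to share the modulus of continuity of the slices $c(x,\cdot)$ and $c(\cdot,y)$, a direct estimate on $\dual{\mu - \mu_h}{\phi}$ via the $c$-concavity structure recovers the claimed rate. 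Either way the quantitative input is exactly the stated nodal interpolation estimate; the bookkeeping is routine once the admissibility transfers are set up.
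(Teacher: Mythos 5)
Your first direction is exactly the paper's argument (interpolate an optimal $\pi$ via $\cI_{X\otimes Y,h}^*$, verify admissibility through the partition of unity, and apply the nodal interpolation estimate to $c$ itself), and it is carried out correctly --- indeed slightly more carefully than in the paper, since you also check nonnegativity of the interpolated plan.

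The second direction is where the proposal has a genuine gap. Your primary route through Kantorovich duality does not deliver the claimed rate, for precisely the reason you flag: the optimal potentials are only $c$-concave, hence semiconcave/Lipschitz, and the one-sided bound that semiconcavity provides on $\cI_h\phi-\phi$ is the wrong one for the inequality you need ($\min(\hP_h)\ge\min(\hP)-Ch^{1+\a}$ requires a \emph{lower} bound on $\cI_h\phi-\phi$, which fails at concave kinks of $\phi$ --- e.g. $\phi(x_1,x_2)=-|x_1|$ in Example~\ref{ex_nonsmooth} gives only $O(h)$ pointwise). Your fallback (i) is the right idea but is left as a gesture; the paper makes it concrete with the explicit competitor
\[
\tpi = \pi_h + \dv{x}\otimes(\nu-\nu_h) + (\mu-\mu_h)\otimes\dv{y},
\]
whose $X$-marginal is $\mu_h+(\mu-\mu_h)=\mu$ because $\nu-\nu_h$ has zero total mass (and symmetrically for $Y$). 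The decisive point, which resolves the regularity worry entirely, is that the resulting cost increment
\[
\hI[\tpi]-\hI_h[\pi_h]=\int_X\dual{\nu-\nu_h}{c(x,\cdot)}\dv{x}+\int_Y\dual{\mu-\mu_h}{c(\cdot,y)}\dv{y}
\]
tests $\mu-\mu_h=\mu-\cI_{X,h}^*\mu$ and $\nu-\nu_h$ against the \emph{slices of the cost function}, which are $C^{1,\a}$ by hypothesis --- no regularity of the potentials is ever needed. So the quantitative input is, as you suspected, only the stated interpolation estimate, but applied to $c$, not to $\phi,\psi$. One caveat worth noting (it applies to the paper's proof as well): $\tpi$ is a priori only a signed measure, so admissibility in $(\hP)$, which requires $\pi\ge0$, is not actually verified; making the argument airtight requires either a nonnegative correction (e.g. a gluing through an intermediate coupling of $\mu_h$ with $\mu$ and of $\nu_h$ with $\nu$) or an argument that the value of $(\hP)$ is unchanged when the sign constraint is relaxed to the two marginal constraints.
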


\begin{proof}
(i) Assume that $\min_{\pi\ge 0} \hI[\pi] \le \min_{\pi_h \ge 0} \hI_h[\pi_h]$. 
The interpolant $\tpi_h = \cI_{X\times Y,h}^* \pi$ of a solution $\pi$ for $(\hP)$ 
is admissible in $(\hP_h)$ since 
\[
\dual{\cI_{X\otimes Y,h}^*\pi}{v\otimes 1} = \dual{\pi}{\cI_{X,h} v \otimes 1} 
= \dual{\mu}{\cI_{X,h} v} = \dual{\cI_{X,h}^*\mu}{v} = \dual{\mu_h}{v},
\]
for every $v\in C(X)$, i.e., $P_X \tpi_h = \mu_h$. Analogously,
we find that $P_Y \tpi_h = \nu_h$. This implies that 
\[\begin{split}
\min_{\pi_h \ge 0} \hI_h[\pi_h] - \min_{\pi\ge 0} \hI[\pi] 
 &\le \hI_h[\tpi_h] - \hI[\pi] \\
 & = \dual{\tpi_h-\pi}{c} \le c_\cI h^{1+\a} \|c\|_{C^{1,\a}(X\times Y)},
\end{split}\]
where we used that $\|\pi\|_{\cM(X\times Y)} = 1$. \\
(ii) If conversely we have $\min_{\pi\ge 0} \hI[\pi] \ge \min_{\pi_h \ge 0} \hI_h[\pi_h]$
we let $\pi_h$ be a discrete solution and consider the measure
\[
\tpi = \pi_h + \dv{x} \otimes (\nu-\nu_h) + (\mu - \mu_h) \otimes \dv{y},
\]
which satisfies
\[
\dual{\tpi}{r} = \dual{\pi_h}{r} + \int_X \dual{\nu-\nu_h}{r(x,\cdot)} \dv{x}
+ \int_Y \dual{\mu-\mu_h}{r(\cdot,y)} \dv{y},
\]
for all $r\in C(X\times Y)$. We have that
\[
\dual{\tpi}{v\otimes 1} = \dual{\mu_h}{v} + \int_Y \dual{\mu-\mu_h}{v} \dv{y}
= \dual{\mu}{v},
\]
i.e., $P_X \tpi = \mu$. Analogously, we find that $P_Y \tpi = \nu$. 
Therefore, $\tpi$ is admissible in the minimization problem $(\hP)$ and hence
\[\begin{split}
\min_{\pi\ge 0}  \hI[\pi] - \min_{\pi_h \ge 0}& \hI_h[\pi_h] 
 \le \hI[\tpi] - \hI[\pi_h]  \\
& = \int_X \dual{\nu-\nu_h}{c(x,\cdot)}\dv{x} + \int_Y \dual{\mu-\mu_h}{c(\cdot,y)} \dv{y}\\
&\le c_\cI h^{1+\a} \big(\max_{x\in X} \|c(x,\cdot)\|_{C^{1,\a}(Y)} 
  + \max_{y\in Y} \|c(\cdot,y)\|_{C^{1,\a}(X)}\big) \\
&\le c_\cI h^{1+\a} \|c\|_{C^{1,\a}(X\times Y)},
\end{split}\]
where we used the property $\|\mu\|_{\cM(X)} = \|\nu\|_{\cM(Y)} = 1$. 
\end{proof}

The estimate can be improved if assumptions on the transport plan 
are made.

\begin{remark}
For the polynomial cost function $c_p(x,y) = (1/p)|x-y|^p$, $1\le p < \infty$,
we have $c_p\in C^{1,\a}(X\times Y)$ for $\a = \min\{1,p-1\}$, so that 
the derived convergence rate is subquadratic if $p<2$. If the 
transport plan is supported away from the diagonal $\{x=y\}$, along which 
the differentiability of $c_p$ is limited, then quadratic convergence applies.
\end{remark}

A similar error estimate is expected to hold if the measures
$\mu$ and $\nu$ are approximated via piecewise affine densities
$f_h$ and $g_h$ as this corresponds to a rescaling of coefficients
and the use of quadrature in the cost functional. 

\begin{remark}
Alternatively to the above discretization, transport plans can be
approximated via discrete measures $\pi_h$ which
have densities $p_h\in \cS^1(\cT_h^X)\otimes \cS^1(\cT_h^Y)$, i.e., 
\[
\dual{\pi_h}{r} = \iint_{X\times Y} r(x,y) p_h(x,y) \dv{(x,y)}
\]
with
\[
p_h(x,y) = \sum_{i=1}^M \sum_{j=1}^N p_h^{ij} \vphi_{x_i}(x) \vphi_{y_j}(y).
\]
We associate discrete densities $f_h\in \cS^1(\cT_h^X)$ and $g_h\in \cS^1(\cT_h^Y)$
with the marginals $\mu$ and $\nu$ via
\[
(f_h,v_h)_h = \dual{\mu}{v_h}, \quad (g_h,w_h)_h = \dual{\nu}{w_h},
\]
for all $v_h\in \cS^1(\cT_h^X)$ and $w_h\in \cS^1(\cT_h^Y)$ and with (discrete) inner
products $(\cdot,\cdot)_h$ on $C(X)$ and $C(Y)$, e.g., 
if $\mu$ and $\nu$ have densities $f$ and $g$ then
$f_h$ and $g_h$ may be defined as their $L^2$ projections.
If the inner products involve quadrature then we have
\[
(f_h,v_h)_h = \int_X \cI_{X,h} [f_hv_h] \dv{x} = \sum_{i=1}^M \b_i f_h(x_i) v_h(x_i),
\]
where $\b_i = \int_X \vphi_{x_i} \dv{x}$ and it follows that
\[
f_h(x_i) = \b_i^{-1} \dual{\mu}{\vphi_{x_i}}
\]
for $i=1,2,\dots,M$. Analogously, we have $g_h(y_j) = \g_j^{-1}\dual{\nu}{\vphi_{y_j}}$. 
The coefficients are thus scaled versions of the coefficients used above. 
Using quadrature in the cost functional leads to 
\[
I[\pi_h] = \iint_{X\times Y} c(x,y) p_h(x,y) \dv{(x,y)} 
\approx \sum_{i=1}^M \sum_{j=1}^N c(x_i,y_j) p_h^{ij} \b_i \g_j.
\]
Again, the coefficients here are scaled versions of the coefficients
$\pi_h^{ij}$ used above. 
\end{remark}

A reduced convergence rate applies for the approximation using 
piecewise constant finite element functions. 

\begin{remark}\label{rem:red_conv_p0}
Approximating measures by measures with densities that 
are elementwise constant, i.e., 
\[
\dual{\mu_h}{v} = \sum_{T\in \cT_h} \mu_h^T \int_T v \dv{x},
\]
we obtain a reduction of the convergence rate by one order. 
\end{remark}


\section{Active Set Strategy} \label{sec_strategy}
For a subset of atoms specified via an index set
\[
\cA \subset \{1,\dots,M\}\times \{1,\dots,N\}
\]
which is admissible in the sense that there exists $\tpi_h$ with
\[
\sum_{j=1,\dots,N, \, (i,j)\in \cA} \tpi_h^{ij} = \mu_h^i, \quad
\sum_{i=1,\dots,M, \, (i,j)\in \cA} \tpi_h^{ij} = \nu_h^j, 
\]
we restrict to discrete transport plans that are supported on $\cA$ and
hence consider the following reduced problem:
\[
(\hP_{h,\cA}) \quad \left\{ 
\begin{array}{l}
\text{Minimize } \hI_{h,\cA}[\pi_h] = \sum_{(i,j)\in \cA} c(x_i,y_j)  \pi_h^{ij} \\[1.5mm]
\text{subject to } \pi_h \ge 0, \ \sum_{j,(i,j)\in \cA} \pi_h^{ij} = \mu_h^i,
\ \sum_{i,(i,j)\in \cA} \pi_h^{ij} = \nu_h^j
\end{array} 
\right. 
\]

The following proposition provides a sufficient condition for
the definition of an active set that leads to an accurate reduction. 

\begin{proposition} \label{prop_as}
Assume that we are given approximations $\tphi_h$ and $\tpsi_h$ of 
exact discrete multipliers $\phi_h$ and $\psi_h$ with
\[
\|\tphi_h-\phi_h\|_{L^\infty(X)} + \|\tpsi_h - \psi_h\|_{L^\infty(Y)} \le \veps_{as}.
\]
If the set of active atoms $\cA$ on $X\times Y$ is defined via
\[
\cA = \big\{(i,j): \tphi_h^i + \tpsi_h^j \ge c(x_i,y_j) - 2 c_{as} \veps_{as} \big\}
\]
with $c_{as} \ge 1$
then the minimization problem $(\hP_{h,\cA})$ is an accurate reduction
of $(\hP_h)$ in the sense that their solution sets coincide. 
\end{proposition}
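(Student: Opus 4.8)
The plan is to reduce everything to a single set-inclusion via linear-programming duality. The key structural observation is that extending a plan supported on $\cA$ by zeros outside $\cA$ produces a feasible point of $(\hP_h)$ with the same objective value, so one always has $\min_{\pi_h\ge 0}\hI_{h,\cA}[\pi_h]\ge\min_{\pi_h\ge 0}\hI_h[\pi_h]$, and conversely any optimal plan of $(\hP_h)$ that happens to be supported on $\cA$ is feasible and hence optimal for $(\hP_{h,\cA})$. Because of this, the two problems share the same solution set as soon as \emph{every} optimal discrete plan $\pi_h$ for $(\hP_h)$ satisfies $\supp\pi_h\subset\cA$. So the whole statement hinges on proving that inclusion.

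To establish it I would fix the exact dual pair $(\phi_h,\psi_h)$ of which $\tphi_h,\tpsi_h$ are the given approximations, and recall from the optimality conditions of Section~\ref{sec_ot} — i.e.\ complementary slackness for the linear program $(\hP_h)$ and its dual — that for an arbitrary primal optimum $\pi_h$ one has $\pi_h^{ij}\big(c(x_i,y_j)-\phi_h^i-\psi_h^j\big)=0$ for all $i,j$, hence $\phi_h^i+\psi_h^j=c(x_i,y_j)$ whenever $\pi_h^{ij}>0$. For such an index pair the triangle inequality and the hypothesis give
\[
\tphi_h^i+\tpsi_h^j \ge \phi_h^i+\psi_h^j-\|\tphi_h-\phi_h\|_{L^\infty(X)}-\|\tpsi_h-\psi_h\|_{L^\infty(Y)} \ge c(x_i,y_j)-\veps_{as} \ge c(x_i,y_j)-2c_{as}\veps_{as},
\]
where the last step uses $c_{as}\ge 1$. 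Thus $(i,j)\in\cA$, i.e.\ $\supp\pi_h\subset\cA$.

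With this inclusion the conclusion follows in three short steps. First, $\cA$ is admissible in the sense demanded for $(\hP_{h,\cA})$, since the restriction to $\cA$ of any optimal $\pi_h$ is $\pi_h$ itself, which is feasible. Second, every optimal $\pi_h$ for $(\hP_h)$ is feasible for $(\hP_{h,\cA})$ with objective value $\min_{\pi_h\ge 0}\hI_h[\pi_h]$, hence is optimal there and $\min_{\pi_h\ge 0}\hI_{h,\cA}[\pi_h]=\min_{\pi_h\ge 0}\hI_h[\pi_h]$. Third, conversely, any optimal $\tpi_h$ for $(\hP_{h,\cA})$, extended by zero, is feasible for $(\hP_h)$ with the same optimal objective value and therefore solves $(\hP_h)$. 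Hence the two solution sets coincide.

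I expect the only genuinely delicate point to be the rigorous use of strong duality and complementary slackness, since Section~\ref{sec_ot} only sketches the optimality system: one must invoke the LP duality theorem to guarantee that a dual optimum $(\phi_h,\psi_h)$ exists — which requires the compatibility $\sum_i\mu_h^i=\sum_j\nu_h^j$, here both equal to $1$, so that $(\hP_h)$ is feasible — and, crucially, that complementary slackness holds for that dual optimum paired with an \emph{arbitrary} primal optimum, which is exactly what makes the support argument apply to the full solution set rather than to one selected plan. Everything else is routine bookkeeping with the zero-extension map between the reduced and full feasible sets.
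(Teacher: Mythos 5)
Your argument is correct and takes essentially the same route as the paper: complementary slackness between the fixed dual optimum $(\phi_h,\psi_h)$ and an arbitrary primal optimum forces $\phi_h^i+\psi_h^j=c(x_i,y_j)$ wherever $\pi_h^{ij}>0$, and the hypothesis together with $c_{as}\ge 1$ then places the support of every solution of $(\hP_h)$ inside $\cA$. You are in fact somewhat more thorough than the paper's own proof, which stops after showing that a solution of $(\hP_h)$ is admissible in $(\hP_{h,\cA})$ and leaves implicit both the zero-extension bookkeeping identifying the two optimal values and the point that complementary slackness must hold for \emph{every} primal optimum paired with the given multipliers.
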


\begin{proof}
Let $\pi_h$ be a solution of the nonreduced problem~$(\hP_h)$ 
and let $\phi_h,\psi_h$ be corresponding Lagrange multipliers. If 
$\pi_h^{ij} \neq 0$ for the pair $(i,j)\in \{1,\dots,M\}\times \{1,\dots,N\}$
then we have $c(x_i,y_j) = \phi_h^i + \psi_h^j$ and hence
\[
\tphi_h^i + \tpsi_h^j  = \tphi_h^i - \phi_h^i + \tpsi_h^j - \psi_h^j + c(x_i,y_j)
\ge c(x_i,y_j) -2 c_{as} \veps_{\rm as}.
\]
This implies that $(i,j) \in \cA$ and $\pi_h$ is admissible in the
reduced formulation~$(\hP_{h,\cA})$. 
\end{proof}

Proposition \ref{prop_as} suggests a multilevel iteration realized
in the subsequent algorithm where the 
Lagrange multipliers of a coarse-grid solution are used as approximations 
for the multipliers on a finer grid which serve to guess the support
of the optimal transport plan. If the optimality conditions are not
satisfied up to a mesh-dependent tolerance then the variable activation 
tolerance is enlarged and the solution procedure
repeated. Because of the quasioptimal quadratic convergence behaviour
of the employed $P1$ finite element method, a quadratic tolerance is used. 

\begin{algorithm}[Multilevel active set strategy] \label{alg_as}
Choose triangulations $\cT_{X,h}$ and $\cT_{Y,h}$ of $X$ and $Y$
with maximal mesh-size $h>0$. Let $\theta_{act}>0$, $0< h_{min} < h$, and 
$c_{opt} > 0$. Choose functions $\tphi_h \in \cS^1(\cT_{X,h})$ and 
$\tpsi_h\in \cS^1(\cT_{Y,h})$. 
\begin{enumerate}
\item[(1)] Define the set of activated atoms via
\[
\cA = \big\{(i,j) : \tphi_h^i + \tpsi_h^j \ge c(x_i,y_j) - \theta_{act} h^2\big\}
\]
and enlarge $\cA$ to guarantee feasibility. 
\item[(2)] Solve the reduced problem $(\hP_{h,\cA})$ and extract multipliers $\phi_h$ and $\psi_h$.
\item[(3)] Check optimality conditions up to tolerance $c_{opt} h^2$ on the 
full set of atoms, i.e., whether 
\[
\phi_h^i + \psi_h^j \le c(x_i,y_j) + c_{opt} h^2,
\]
is satisfied for all $(x_i,y_j) \in \cN_{X,h}\times \cN_{Y,h}$.
\item[(4)] If optimality holds and $h>h_{min}$ then refine  
triangulations $\cT_{X,h}$ and $\cT_{Y,h}$, prolongate functions $\phi_h$ 
and $\psi_h$ to the new triangulations with new mesh-size $h\leftarrow h/2$\
to update $\tphi_h$ and $\tpsi_h$, 
set $\theta_{act} \leftarrow \theta_{act}/2$, and continue with~(1).
\item[(5)] If optimality fails then set $\theta_{act} \leftarrow 2 \theta_{act}$ and continue
with~(1). 
\item[(6)] Stop if optimality holds and $h\le h_{min}$. 
\end{enumerate}
\end{algorithm}

Various modifications of Algorithm~\ref{alg_as} are possible that may
lead to improvements of its practical performance. 

\begin{remarks} 
(i) The activation parameter $\theta_{act}$ is adapted during the procedure, 
i.e., the increased constant is used in a new iteration on one level. To avoid 
activating too many atoms initially, $\theta_{act}$ is decreased whenever a 
new level is reached. \\
(ii) The quadratic tolerance in the verification of the optimality conditions
turned out to be sufficient to obtain a quadratic convergence of optimal 
costs and of the Lagrange multipliers in our experiments. \\ 
(iii) The initial parameter $\theta_{act}$ can be optimized on the coarsest 
mesh by repeatedly reducing it until optimality fails. 
\end{remarks}


\section{Numerical experiments} \label{sec_exp}
In this section we illustrate our theoretical investigations via several 
experiments. We implemented Algorithm~\ref{alg_as} in {\sc Matlab} and used 
the optimization package {\sc Gurobi}, cf.~\cite{gurobi}, to solve the linear programs. 
The experiments were run on a 2012 MacBook Air (1.7 GHz Intel Core i5 with 4 GB RAM)
with {\sc Matlab} version R2015b. Integrals were evaluated using a three-point
trapezoidal rule on triangles. 
The employed triangulations result from uniform refinements
of an initial coarse triangulation and are represented via their refinement
level $k \in \N$ so that the maximal mesh-size satisfies
$h \sim 2^{-k}$. The number of nodes in the triangulations 
of the spaces $X$ and $Y$ are referred to by $M$ and $N$, respectively. 

\subsection{Problem specifications}
We consider four different transport problems specified via the sets $X$ and
$Y$ and the marginals $\mu$ and $\nu$ together with different polynomial cost 
functions 
\[
c_p(x,y) = \frac1p |x-y|^p,
\]
where $p \in \{ 3/2,2,3\}$. These choices are prototypical 
for subquadratic, quadratic, and superquadratic costs leading 
to singular, linear, and degenerate cost gradients, respectively.
In the special case of a quadratic cost solutions for the optimal transport
problem can be constructed using the Monge--Amp\`ere equation
\[
\det D^2 \Phi = \frac{f}{g\circ \nabla \Phi}
\]
and the relations for the transport map and the multipliers 
\[
T = \nabla \Phi, \quad \phi(x) = \frac{|x|^2}{2} - \Phi (x), \quad 
\psi(y) = \frac{|y|^2}{2} - \Phi^*(y), 
\]
with the convex conjugate $\Phi^*(y) = \sup_x x\cdot y - \Phi(x)$ 
of $\Phi$, cf.~\cite{Vil08} for details. 
Moreover, we then have the optimal cost
\[
I[T] = I[\nabla \Phi] = \int_X c_2(x,\nabla \Phi(x)) \dv{\mu(x)}.
\]
The first example is one-dimensional and allows for a simple visualization
of the transport map. 

\begin{example}[One-dimensional transport] \label{ex_1d}
Let $X=Y=[0,1]$ and $\mu$ and $\nu$ be defined via the densities
\[
f(x) = \frac23 (x + 1), \quad g(y) = 1,
\] 
respectively. For $p=2$ the optimal transport plan is given by 
the transport map $T=\nabla \Phi$ with the potential 
\[
\Phi: [0,1] \rightarrow \R, \quad x \mapsto \frac{1}{9} x^3 + \frac{1}{3} x^2,
\]
and the Lagrange multiplier $\phi$ satisfies
\[
\phi(x) = \frac{1}{6} x^2 - \frac{1}{9} x^3.
\]
The optimal cost for $p=2$ is given by $I[T] = 1/540$.
\end{example}

Our second example concerns the transport between two rectangles with
a differentiable transport map. 

\begin{example}[Smooth transport between rectangles] \label{ex_mae1}
Defining $X=[0,1]^2$ and $\Phi(x_1,x_2) = x_1^2 + x_2^3$ we 
set $Y = \nabla  \Phi(X) = [0,2] \times [0,3]$ and $g=1$. The 
Monge--Amp\`ere equation determines 
\[
f(x_1,x_2) = 12x_2,
\]
so that the optimal cost value for $p=2$
is given by $I[\nabla \Phi] = 43/10$.
\end{example}

In order to compare our algorithm to the results from~\cite{ObeRua15}
we incorporate Example~4.1 from that article. 

\begin{example}[Setting from~\cite{ObeRua15}] \label{ex_41}
On $X = Y = [-1/2,1/2]^2$, let $\mu$ and $\nu$ be defined by the 
densities 
\begin{align*}
f(x_1,x_2) = \ &1 + 4 ( q''(x_1)q(x_2) + q(x_1)q''(x_2) ) \\ 
&+ 16 (q(x_1)q(x_2)q''(x_1)q''(x_2)-q'(x_1)^2 q'(x_2)^2)
\end{align*}
and $g=1$, where
\begin{align*}
q(z) = \left( -\frac{1}{8\pi}z^2 + \frac{1}{256\pi^3} 
+ \frac{1}{32\pi} \right) \cos(8\pi z) + \frac{1}{32\pi^2}z \sin(8\pi z).
\end{align*}
For $p=2$ we obtain an exact solution via the Monge--Amp\`ere equation. 
\end{example}

The final example describes the splitting of a square into two 
rectangles. 

\begin{example}[Discontinuous transport] \label{ex_nonsmooth}
Let $X = [-1/2,1/2]^2$ and $Y = \big([-3/2,-1]  \cup [1,3/2]\big) \times [-1/2,1/2]$ 
be equipped with the constant densities $f = 1$ and $g = 1$. For any strictly convex 
cost function, optimal transport maps $T$ isometrically map the left half of the square 
to the rectangle on the left side and the other half to the one on the right, i.e.,
up to identification of Lebesgue functions, 
\[
T(x_1,x_2) = 
\begin{cases}
(x_1+1,x_2) & \text{if } x_1 > 0, \\ 
(x_1-1,x_2) & \text{if } x_1 < 0. 
\end{cases}
\]
For $p=2$ we have $T = \nabla \Phi$ with 
\[
\Phi(x_1,x_2) = \frac{x_1^2 + x_2^2}{2} + |x_1|,
\]
with corresponding Lagrange multiplier $\phi(x_1,x_2) = -|x_1|$.
\end{example}

Figure~\ref{fig:examples} shows characteristic features of the
four examples. In particular, in the upper left plot of Figure~\ref{fig:examples}
the transport plan is the graph of a monotone function and we illustrated
an activated set of atoms of a discretization that approximates the 
graph. 

\begin{figure}
\input{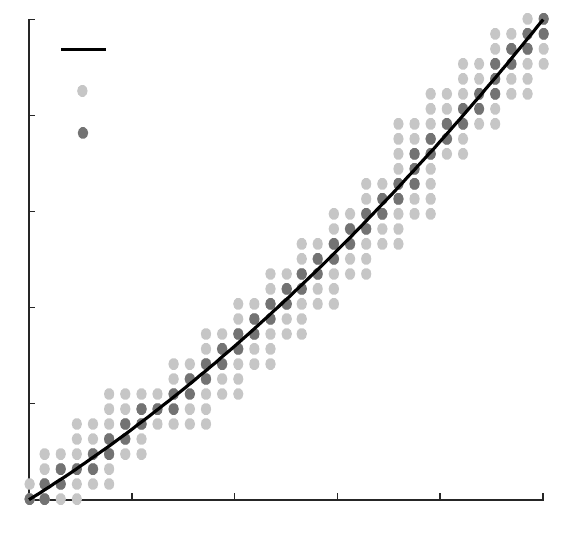_t}\hspace*{16mm}
\input{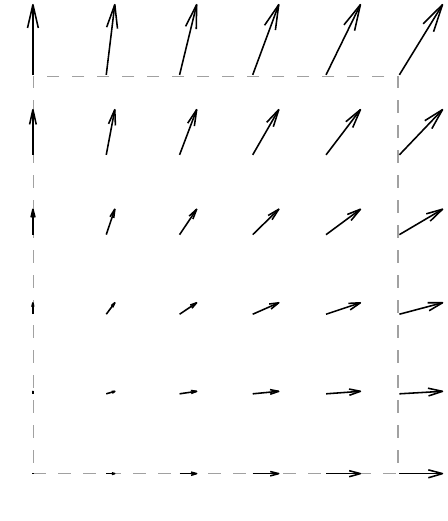_t} \\
\includegraphics[width=6.3cm,height=5.5cm]{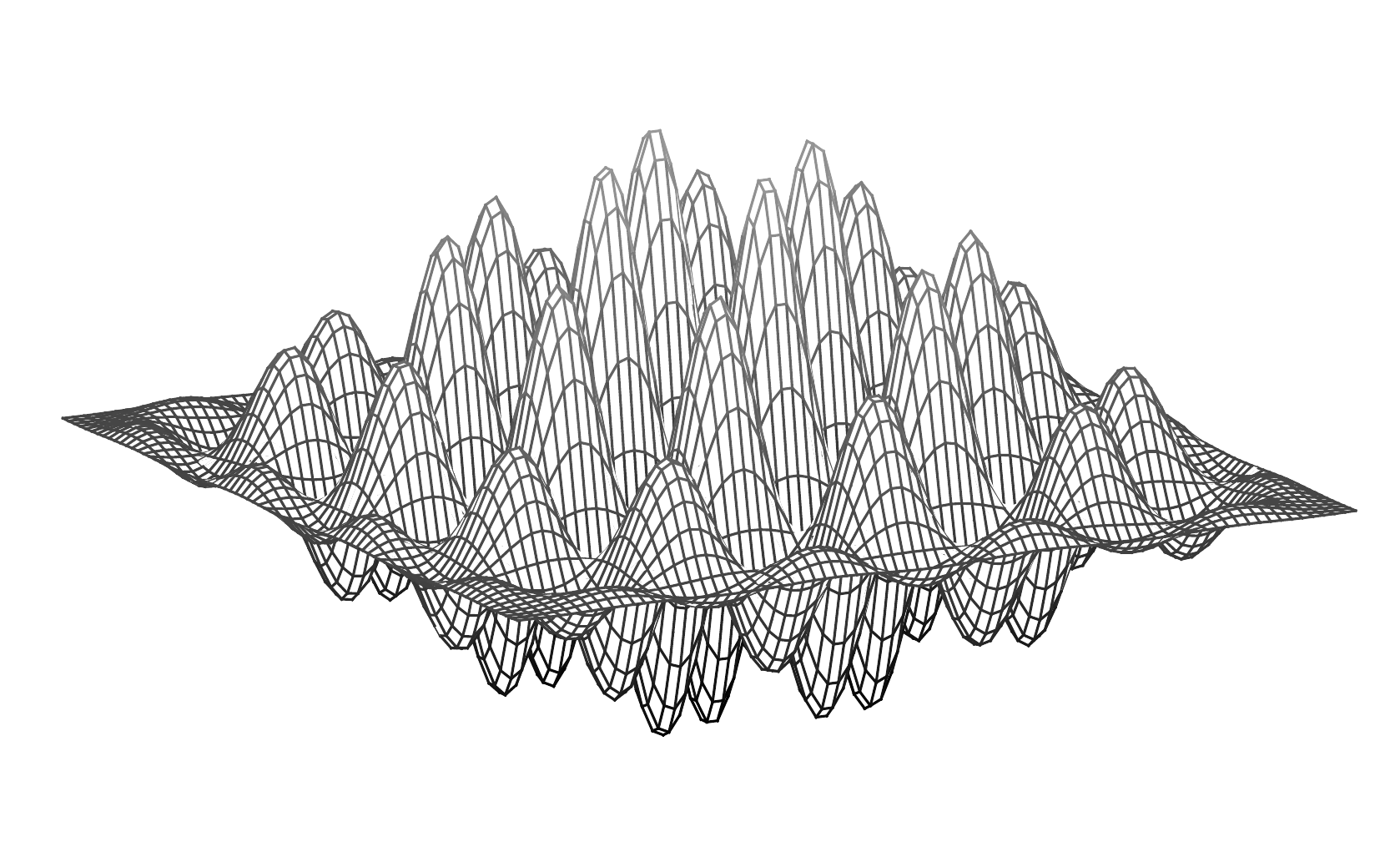} \hspace*{2mm} 
\input{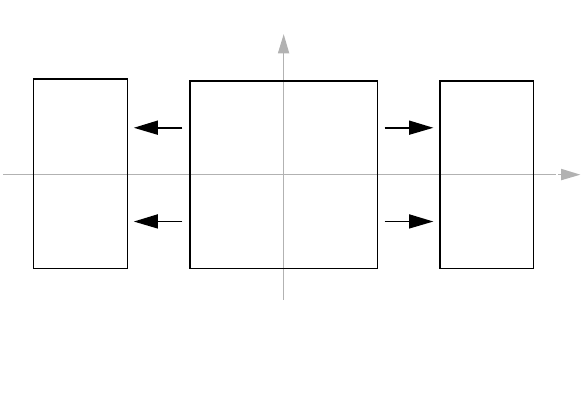_t}  
\caption{\label{fig:examples} Characteristic features of
the transport problems defined in Examples~\ref{ex_1d}-\ref{ex_nonsmooth}
(from left to right and top to bottom):
(i)~optimal transport plan given by a graph together with activated atoms
and discrete support for $k=5$ in Example~\ref{ex_1d}, 
(ii)~optimal transport map $T=\nabla \Phi$ in Example~\ref{ex_mae1} interpreted as a vector field,
(iii)~oscillating density $f$ in Example~\ref{ex_41}, 
(iv)~piecewise affine optimal transport plan $T$ in Example~\ref{ex_nonsmooth}.}
\end{figure}

\subsection{Complexity considerations}
A crucial quantity to determine the efficiency of our devised
method is the growth of the cardinalities of the activated sets. 
In Table~\ref{tab_3size} we display for
Examples~\ref{ex_1d}-\ref{ex_nonsmooth} the corresponding numbers
on different triangulations and for different cost functions. 
We observe that in all experiments the size of the activated sets
grows essentially linearly in strong contrast to the quadratic growth of the 
theoretical number of unknowns of the corresponding discrete
transport problem. A slight deviation of this behaviour occurs in Example~\ref{ex_mae1}
for $p=3$ where the increase of the active set size is larger than
the expected factor~4. We note that we observed a reduction of the
active set sizes by factors of approximately~$2^{-d}$ compared to 
the sizes obtained with the algorithm from~\cite{ObeRua15} for generic
choices of parameters. Because of the very few required redefinitions
of the active set, particularly for $p\ge 2$, we conclude that the
optimality conditions provide a precise prediction of the supports
even if only approximations of the multipliers are available, i.e.,
this property appears to be very robust with respect to perturbations
of the multipliers. 

\begin{table}[htb]
	\begin{align*}
	\begin{array}{l||r|r|r|r} \hline\hline
 	{\rm Ex.}~\ref{ex_1d} & k=7 & k=8 & k=9 & k=10 \\
 	\hline
 	M+N & 258 & 514 & 1.026 & 2.050 \\
 	MN  & 16.641 & 66.049 & 263.169 & 1.050.625 \\
 	\hline
 	p = 3/2 & 763 \ (0) & 1.531 \ (0) & 3.067 \ (0) & 6.139 \ (0) \\
 	p = 2 & 763 \ (0) & 1.531 \ (0) & 3.067 \ (0) & 6.139 \ (0) \\
 	p = 3 & 763 \ (0) & 1.539 \ (0) & 3.114 \ (0) & 6.442 \ (0) \\[1mm] \hline\hline  
	{\rm Ex.}~\ref{ex_mae1} & k=3 & k=4 & k=5 & k=6 \\
	\hline
	M+N & 506 & 1.906 & 7.394 & 29.122 \\
	MN & 34.425 & 467.313 & 6.866.145 & 105.189.825 \\
	\hline
	p = 3/2 & 6.268 \ (8) & 27.846 \ (1) & 179.594 \ (2) & 745.713 \ (1) \\
	p = 2 & 3.929 \ (0) & 15.729 \ (0) & 63.115 \ (0) & 252.951 \ (0) \\
	p = 3 & 8.085 \ (2) & 56.703 \ (2) & 255.965 \ (1) & 1.847.207 \ (2) \\[1mm] \hline\hline  
	{\rm Ex.}~\ref{ex_41} & k=3 & k=4 & k=5 & k=6 \\
	\hline
	M+N & 162 & 578 & 2.178 & 8.450 \\
	MN & 6.561 & 83.521 & 1.185.921 & 17.850.625 \\
	\hline
	p = 3/2 &1.389 \ (0) & 20.787 \ (7) & 58.575 \ (1) & 183.465 \ (1) \\
	p = 2 & 1.589 \ (0) & 5.755 \ (0) & 24.018 \ (0) & 103.100 \ (0) \\
	p = 3 & 1.495 \ (0) & 6.319 \ (0) & 26.205 \ (0) & 106.857 \ (0) \\[1mm] \hline\hline 
	{\rm Ex.}~\ref{ex_nonsmooth}  &  k=3 & k=4 & k=5 & k=6 \\
	\hline
	M+N & 171 & 595 & 2.211 & 8.515 \\
	MN & 7.290 & 88.434 & 1.21.858 & 18.125.250 \\
	\hline
	p = 3/2 & 1.346 \ (0) & 6.384 \ (0) & 24.135 \ (0) & 95.240 \ (0) \\
	p = 2 & 1.654 \ (0) & 6.921 \ (0) & 29.106 \ (0) & 120.153 \ (0) \\
	p = 3 & 1.274 \ (0) & 5.602 \ (0) & 21.353 \ (0) & 85.463 \ (0) \\[1mm] \hline\hline 
        \end{array}
	\end{align*}
\caption{\label{tab_3size} Total number of nodes $M+N$, number of unknowns 
in the full optimization problem $MN$, and cardinalities of activated sets at 
optimality with number of tolerance increases in brackets in Examples~\ref{ex_1d}-\ref{ex_nonsmooth}
on triangulations with refinement level $k$ and different 
cost functions $c_p(x,y)$.} 
\end{table}	

In Table~\ref{tab_5size} we display the total CPU time needed to
solve the optimization problem on the $k$-th level. This includes
the repeated activation of atoms, the repeated solution of the
reduced linear programs, and the verification of the optimality
conditions. We observe a superlinear growth of the numbers. These
are dominated by the times needed to solve the linear programs 
whereas the (non-parallelized) verification of the optimality conditions was negligible
in all tested situations. 

\begin{table}[htb]
	\begin{align*}
	\begin{array}{l||r|r|r|r} \hline\hline
	{\rm Ex.}~\ref{ex_1d} & k=7 & k=8 & k=9 & k=10 \\
 	\hline
	p = 3/2 & 0.0575 & 0.1357 & 0.4281 & 1.1840 \\
	p = 2 & 0.0603 & 0.1288 & 0.3294 & 1.0257 \\
	p = 3 & 0.0593 & 0.1345 & 0.3943 & 1.3115\\[1mm] \hline\hline  
	{\rm Ex.}~\ref{ex_mae1} & k=3 & k=4 & k=5 & k=6 \\
	\hline
	p = 3/2 & 0.2063 & 0.8805 & 6.9296 & 49.5964 \\
	p = 2 & 0.2187 & 0.5262 & 2.4734 & 21.1738 \\
	p = 3 & 0.2584 & 1.6697 & 7.3599 & 97.7239 \\[1mm] \hline\hline  
	{\rm Ex.}~\ref{ex_41} & k=3 & k=4 & k=5 & k=6 \\
	\hline
	p = 3/2 & 0.1090 & 0.7655 & 1.5510 & 9.5078 \\
	p = 2 & 0.1106 & 0.1718 & 0.7735 & 4.6622 \\
	p = 3 & 0.1410 & 0.1886 & 0.8557 & 5.6919 \\[1mm] \hline\hline 
	{\rm Ex.}~\ref{ex_nonsmooth}  &  k=3 & k=4 & k=5 & k=6 \\
	\hline
	p = 3/2 & 0.1192 & 0.1777 & 0.8014 & 4.9880 \\
	p = 2 & 0.1054 & 0.1766 & 0.6771 & 4.0459 \\
	p = 3 & 0.1214 & 0.1700 & 0.7194 & 4.8851 \\ \hline\hline
	\end{array}
	\end{align*}
	\caption{\label{tab_5size} Total CPU time in seconds on $k$-th level in 
        Examples~\ref{ex_1d}-\ref{ex_nonsmooth} with different polynomial 
        cost functions $c_p(x,y)$.}
\end{table}

\subsection{Experimental convergence rates}
In Figures~\ref{fig:eoc_cost} and~\ref{fig:eoc_multi}
we show for Examples~\ref{ex_1d} 
and~\ref{ex_mae1} the error in the approximation of the optimal cost, i.e.,
the quantities 
\[
\d_h = \big|\min_{\pi \ge 0} \hI[\pi] - \min_{\pi_h\ge 0}\hI_h[\pi_h]\big| 
\]
and the error in the approximation of the Lagrange multiplier $\phi$, i.e.,
the quantities
\[
\veps_h = \|\cI_{X,h} \phi - \phi_h\|_{L^\infty(X)}. 
\]
If the exact optimal cost or the multiplier was not known, i.e., 
if $p\neq 2$, we used
an extrapolated reference value or considered the difference 
$\cI_{X,h} \phi_{h/2}-\phi_h$ to define $\d_h$ and $\veps_h$,
respectively. 
We tested different polynomial costs and considered sequences of 
uniformly refined triangulations. Beause of the relation
\[
h \sim (M+N)^{-1/d},
\]
a quadratic convergence rate~$\cO(h^2)$ corresponds to a slope
$-2/d$ with respect to the total number of nodes $M+N$. 
Figure~\ref{fig:eoc_cost} confirms the estimate from 
Proposition~\ref{prop_energy_conv} and additionally shows that the 
quadratic convergence rate is optimal. The experimental results
also reveal that the employed quadratic tolerance in the
verification of the optimality conditions in Algorithm~\ref{alg_as}
is sufficient to preserve the convergence rate of the 
linear program using the full set of atoms. 
Figure~\ref{fig:eoc_multi} indicates that quadratic convergence
in $L^\infty(X)$ also holds for the approximation of the Lagrange multiplier
$\phi$ provided this quantity is sufficiently regular. In particular,
we observe here a slower convergence behaviour for $p=3/2$. 

\begin{figure}[htb]
\includegraphics[width=.7\linewidth]{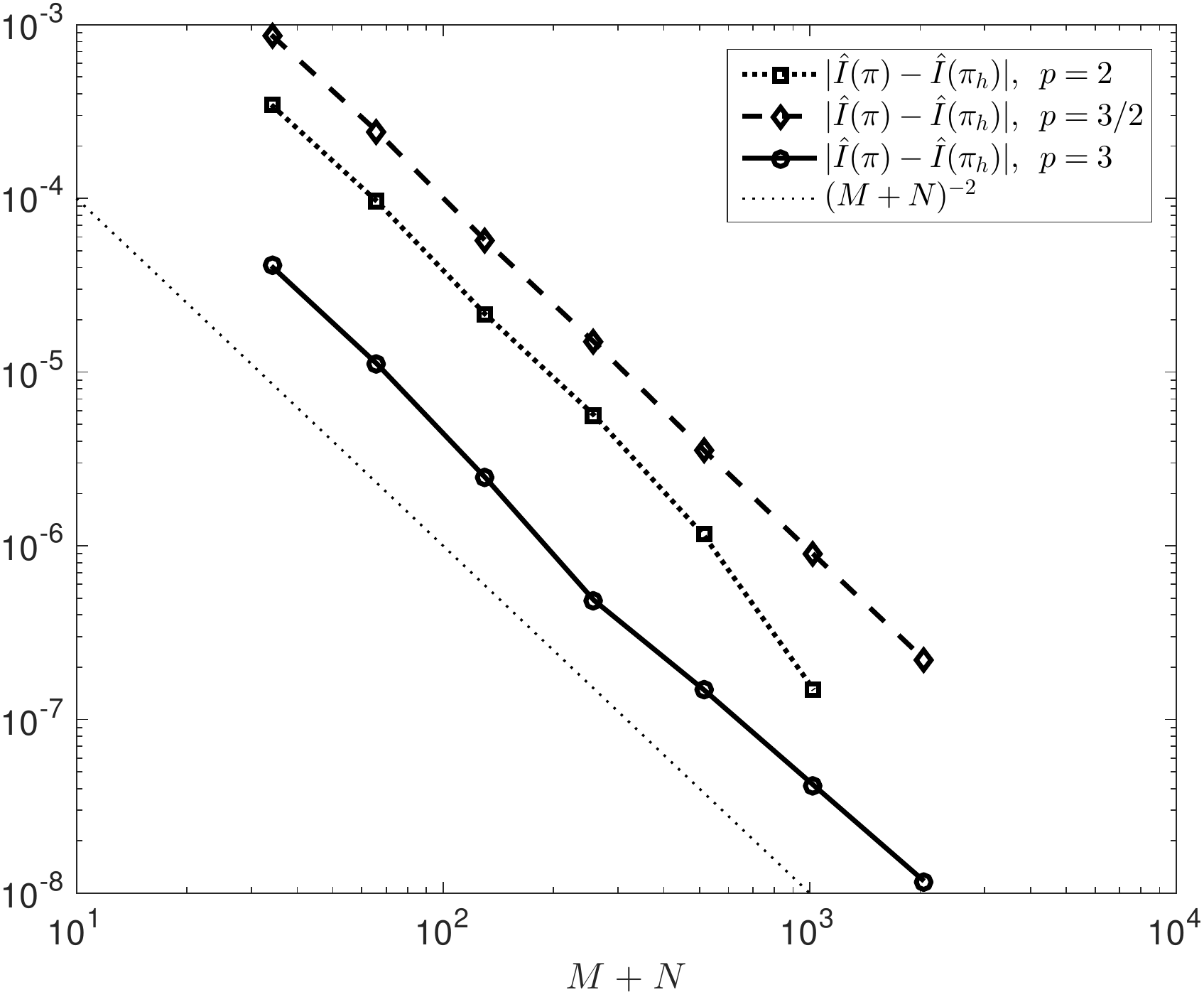}
\includegraphics[width=.7\linewidth]{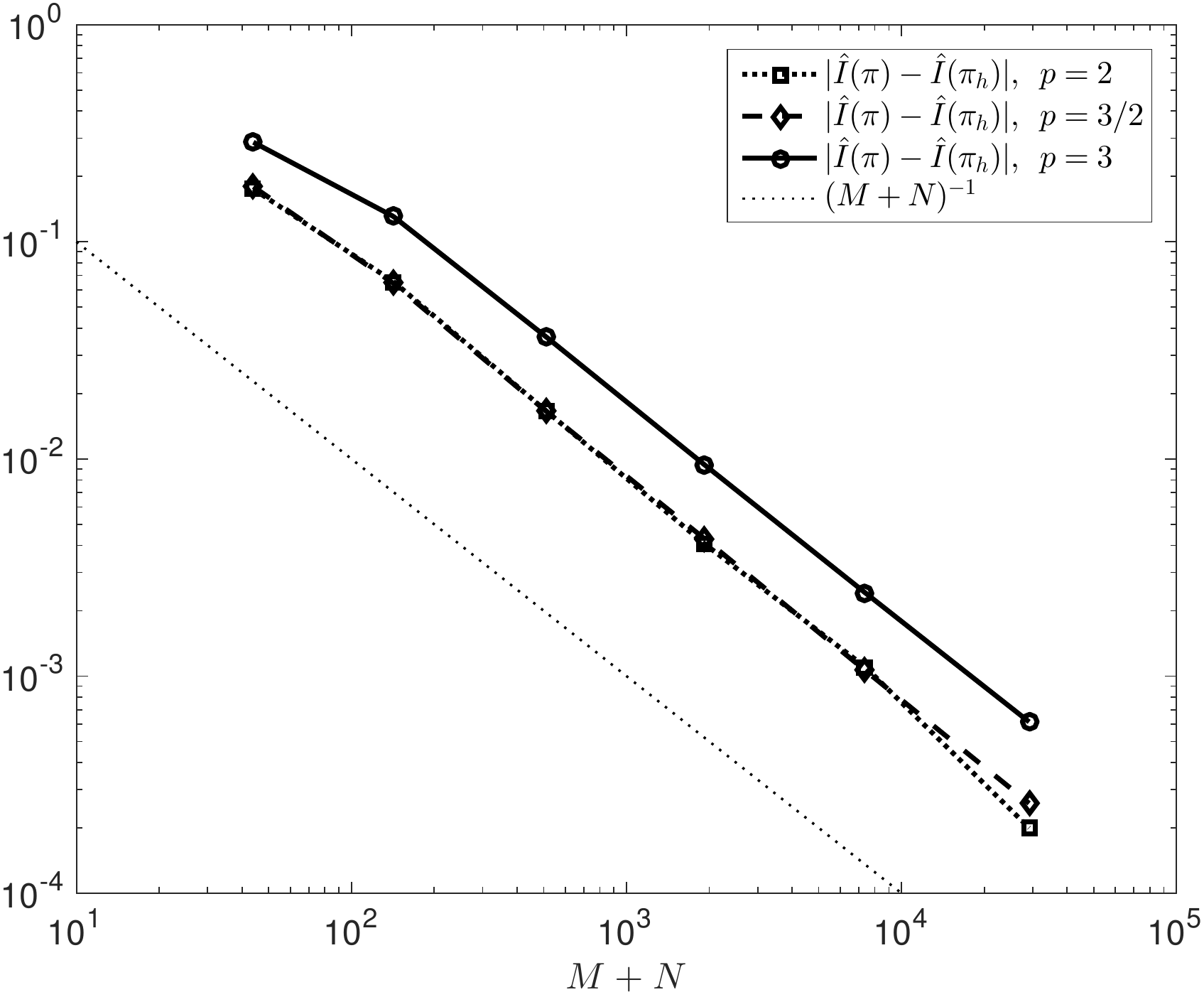}
\caption{\label{fig:eoc_cost} Experimental convergence of
optimal costs in Examples~\ref{ex_1d} (left)
and~\ref{ex_mae1} (right) for different cost functions
on sequences of uniformly refined triangulations.}
\end{figure}

\begin{figure}[htb]
\includegraphics[width=.7\linewidth]{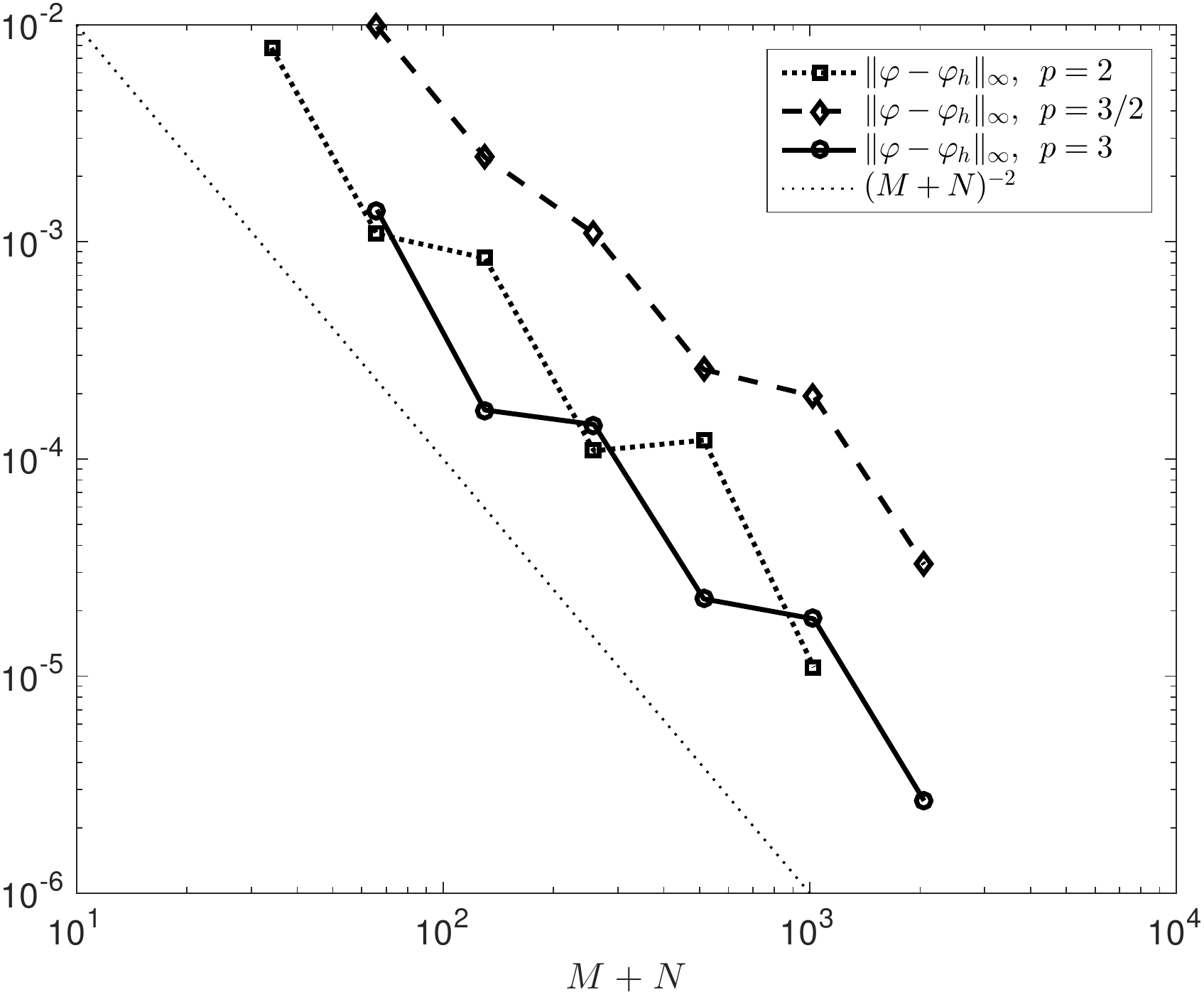} 
\includegraphics[width=.7\linewidth]{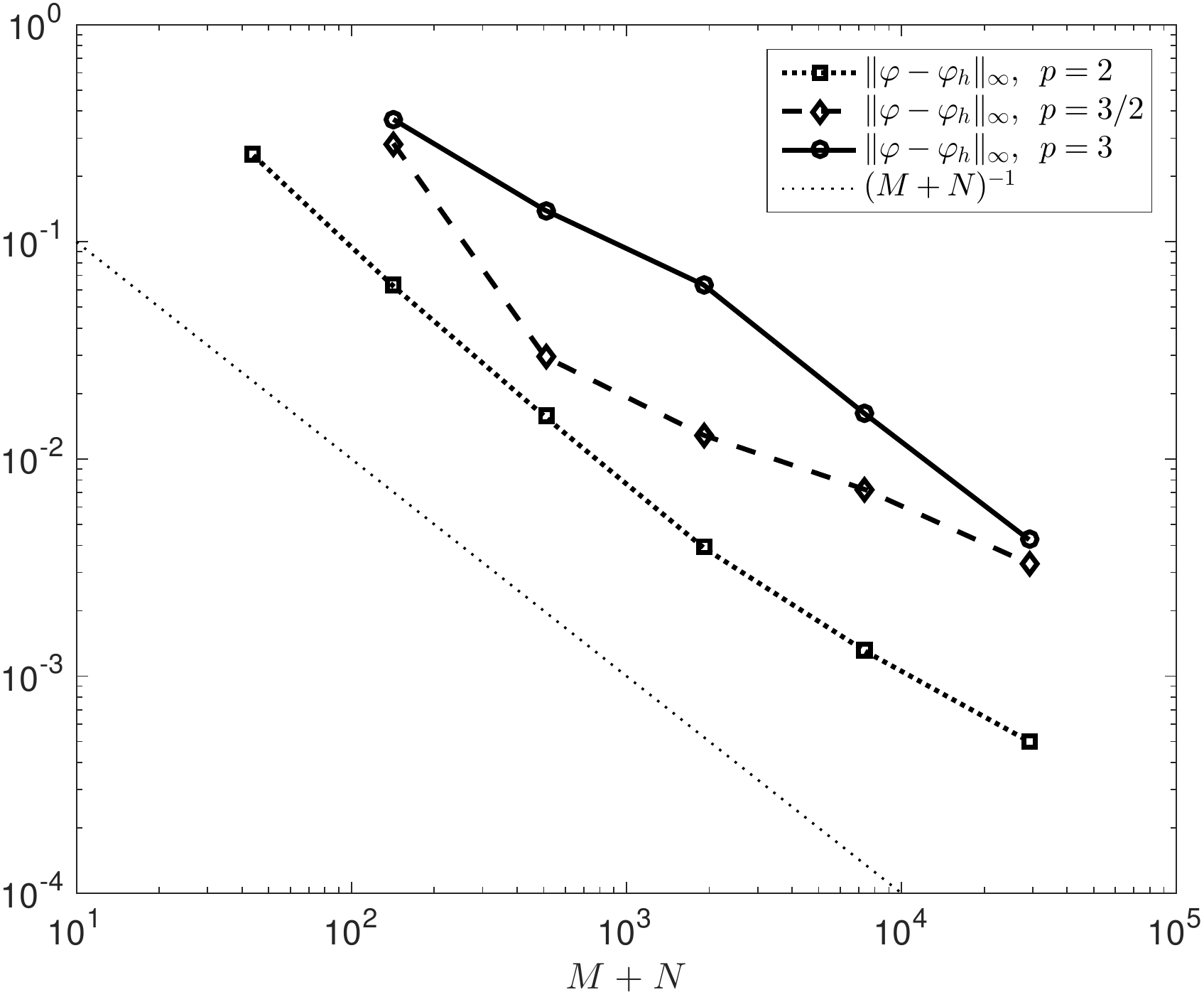}
\caption{\label{fig:eoc_multi} Experimental convergence 
rates of the discrete multiplier $\phi_h$ in Examples~\ref{ex_1d} (left)
and~\ref{ex_mae1} (right) for different cost functions
on sequences of uniformly refined triangulations.}
\end{figure}	

In~\cite{ObeRua15} an approximately linear convergence rate in $L^\infty$ of
the mulitpliers has been 
reported for Example~\ref{ex_41} which is consistent with the 
piecewise constant approximation of densities of measures used
in that article, cf. Remark~\ref{rem:red_conv_p0}.
In particular, discrete duality yields that
the Lagrange multipliers occurring in the discretized optimal
transport problems are discretized in the same spaces. 
For our discretization using continuous, piecewise
affine approximations we obtain a nearly quadratic 
experimental convergence rate in this 
example as well, as can be seen in Table~\ref{tab:comp_OR} in which
we also display the errors from~\cite{ObeRua15}. 

\begin{table}[htb]
\begin{tabular}{c||c|c|c|c|c}
$\veps_h$ & $h \sim 2^{-5}$ & $h\sim 2^{-6}$ & $h\sim 2^{-7}$ & $h\sim 2^{-8}$ & $h\sim 2^{-9}$ \\\hline
$P1$ (Alg.~\ref{alg_as})& 0.00781 & 0.00238 & 0.00086  & -- & -- \\
$P0$ (\cite{ObeRua15})  & 0.00721 & 0.00892 & 0.00689 & 0.00241 & 0.00148 
\end{tabular} 
\vspace*{2mm}
\caption{\label{tab:comp_OR} Experimental errors 
$\veps_h = \|\cI_h \phi - \phi_h\|_{L^\infty(\O)}$ for 
discretizations using $P1$ and $P0$ approximations 
of densities in Example~\ref{ex_41} with $p=2$.}
\end{table}

\medskip
\noindent
{\em Acknowledgments.} SB acknowledges support by
the DFG via the priority program {\em Non-smooth and 
Complementarity-based Distributed Parameter Systems: 
Simulation and Hierarchical Optimization} (SPP 1962).


\bibliographystyle{amsalpha}
\bibliography{bib_project}


\end{document}